\newtheorem{theorem}{Theorem}[section]
\newtheorem{lemma}[theorem]{Lemma}
\newtheorem{prop}[theorem]{Proposition}
\theoremstyle{definition}
\theoremstyle{remark}
\newtheorem{remark}[theorem]{Remark}
\numberwithin{equation}{section}
\newcommand{\re}{\text{Re}\,}
\newcommand{\bR}{\mathbb R}
\newcommand\md{\mathrm{d}}
\newcommand\cT{\mathcal{T}}
\newcommand{\Div}{\operatorname{div}}
\def\dashint{\operatorname%
{\,\,\text{\bf--}\kern-.98em\DOTSI\intop\ilimits@\!\!}}
\renewcommand{\epsilon}{\varepsilon}
\begin{document}
\title [Multi-dimensional transport equation]{On a multi-dimensional transport equation with nonlocal velocity}

\author[H. Dong]{Hongjie Dong}
\address[H. Dong]{Division of Applied Mathematics, Brown University, 182 George Street, Box F, Providence, RI 02912, USA}
\email{Hongjie\_Dong@brown.edu}
\thanks{Email: Hongjie\_Dong@brown.edu, Tel: 1-4018637297, Fax: 1-4018631355. Hongjie Dong was partially supported by the National Science Foundation under agreement No. DMS-1056737.}


\subjclass[2010]{35R11, 35Q35, 82C70}

\keywords{transport equation, nonlocal velocity, fractional dissipation}

\begin{abstract}
We study a multi-dimensional nonlocal active scalar equation of the form $u_t+v\cdot \nabla u=0$ in $\bR^+\times \bR^d$, where $v=\Lambda^{-2+\alpha}\nabla u$ with $\Lambda=(-\Delta)^{1/2}$. We show that when $\alpha\in (0,2]$ certain radial solutions develop gradient blowup in finite time. In the case when $\alpha=0$, the equations are globally well-posed with arbitrary initial data in suitable Sobolev spaces.
\end{abstract}

\maketitle

\section{Introduction and main results}               \label{sec1}

The problem of finite-time singularity versus global regularity for active scalar equations with nonlocal velocities has attracted much attention in recent years. We refer the reader to a survey paper \cite{K10} and the references therein for some recent progress in this area.

In this paper, we study a multi-dimensional nonlocal active scalar equation of the form
\begin{equation}
                            \label{GQS}
u_t+v\cdot \nabla u=0
\end{equation}
for $x\in\bR^d$ and $t> 0$ with the initial data $u(0,x)=u_0(x)$ for $x\in\bR^d$.
Here $d\ge 1$ is the space dimension and $v$ is the velocity field determined by the constitutive relation
$v=\Lambda^{-2+\alpha}\nabla u$, where $\Lambda=(-\Delta)^{1/2}$ and $\alpha\in \bR$ is a constant. The equation becomes more singular as $\alpha$ increases.

In the 1D case when $\alpha=1$, Equation \eqref{GQS} is reduced to
\begin{equation}
                    \label{eq8.52}
u_t-(\mathcal H u)u_x=0,
\end{equation}
where $\mathcal H u$ is the Hilbert transform of $u$. It can be viewed as a one dimensional analogue of the 3D Euler equations in the vorticity formulation and the 2D surface quasi-geostrophic equation, as well as of the Birkhoff--Rott equations modeling the evolution of a vortex sheet with surface tension.
This equation was introduced by C\'{o}rdoba, C\'{o}rdoba, and Fontelos \cite{CCF05} (see also \cite{CCF06,K10}), where they showed that, for a generic family of initial data, local smooth solutions to \eqref{eq8.52} may blow up in finite time. It was also proved in  \cite{CCF05} that, by adding a dissipation term the equation
\begin{equation}
                    \label{eq8.52b}
u_t-(\mathcal Hu)u_x=-\Lambda^\beta u,
\end{equation}
is globally well-posed with positive $H^2$ initial data in the
case $\beta>1$, as well as in the case $\beta=1$ with small initial data. Later, in \cite{D08} the local well-posedness of \eqref{eq8.52b} in critical Sobolev spaces was studied in detail, and the global well-posedness was obtained for $\beta\ge 1$ with arbitrary initial data in suitable Sobolev spaces. In \cite{LR08} Li and Rodrigo proved formation of singularities in finite time of solutions when $\beta\in (0,1/2)$. In the case when $\beta\in [1/2,1)$, whether solutions of \eqref{eq8.52b} with smooth initial data may blow up in finite time remains to be an interesting open question.

A multi-dimensional generalization of \eqref{eq8.52} given by
\begin{equation}
                                \label{eq9.54}
u_t-\mathcal Ru\cdot \nabla u=0
\end{equation}
was considered in Balodis and C\'{o}rdoba \cite{BC07}, and finite-time blowup of solutions was proved. Here $\mathcal Ru=(\mathcal R_1 u,\ldots, \mathcal R_d u)$ is the Riesz transform of $u$.
When $d=2$, such result was also proved for a similar equation
in \cite{dongli2} independently.
Note that Equation \eqref{eq9.54} corresponds to \eqref{GQS} with $\alpha=1$, and can be viewed as a generalized model for the 2D surface quasi-geostrophic equation.

In the 2D case when $\alpha\in [1/2,1]$, Equation \eqref{GQS} (with a possible dissipation term) was first considered by Li and Rodrigo \cite{LR09}, and finite-time blowup of solutions was shown. In this case, the equation can be regarded as a generalization of the family of interpolating models between the surface quasi-geostrophic equation and the 2D Euler equation; See \cite{CFMR05, G08, CCW11,CIW} for some discussions.
In a recent paper \cite{Ch14}, Chae studied Equation \eqref{GQS} for general $\alpha$ in the multi-dimensional case. Among other results, he proved that the equation is locally well-posed in $H^s$ spaces for suitable $s$ when $\alpha\in (-d+1,2)$, and is globally well-posed for nonnegative initial data when $\alpha\in (-d+1,0)$.

The objective of this paper is twofold. First we show that when $\alpha=0$, the equation is still globally well-posed with arbitrary initial data in suitable Sobolev spaces. Second, $\alpha=0$ is the critical value for the global well-posedness of \eqref{GQS}, in the sense that if $\alpha\in (0,2]$ certain radial solutions develop gradient blowup in finite time.

Let us state our results more precisely. For the finite-time blowup of solutions, we consider initial profile $u_0$, which is smooth, radially symmetric, compactly supported, nonnegative, and attains its positive maximum at the origin.

\begin{theorem}[Finite-time blowup when $\alpha\in (0,2)$]
                            \label{thm1}
Let $\alpha\in (0,2)$. Then with any initial data $u_0$ satisfying the conditions above, the solution $u$ must develop gradient blowup in finite time.
\end{theorem}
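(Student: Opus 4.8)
The plan is to pass to radial profiles, track one scale‑critical functional of the solution, show it satisfies a Riccati differential inequality, and then read off the gradient blowup along a characteristic that is swept into the origin.

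First I would record the structure of radial solutions. Since both $w\mapsto\Lambda^{-2+\alpha}\nabla w$ and $w\mapsto v\cdot\nabla w$ commute with rotations, radial symmetry of $u_0$ persists; write $u(t,x)=U(t,|x|)$ and $v(t,x)=b(t,|x|)\,x/|x|$, so $v(t,0)=0$. Thus the origin is a stagnation point of the flow: the characteristic through it never moves, and since $u$ is transported, $u(t,0)=u_0(0)=:M=\|u_0\|_{L^\infty}=\|u(t)\|_{L^\infty}$ as long as the solution is smooth. Nonnegativity and radial monotonicity of $U(t,\cdot)$ are preserved, because the induced map $r\mapsto R(t,r)$ is an increasing diffeomorphism of $[0,\infty)$ fixing $0$. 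I would also use two structural facts: $\Div v=-\Lambda^\alpha u$ (immediate from $v=\Lambda^{-2+\alpha}\nabla u$), and, for $\alpha\in(0,2)$, the kernel representation $v(t,x)=c\,\mathrm{p.v.}\!\int_{\bR^d}\frac{(y-x)\,u(t,y)}{|x-y|^{d+\alpha}}\,dy$ with a constant $c=c(d,\alpha)>0$; reflecting $y$ across the perpendicular bisector of the segment $0x$ and using that $u(t,\cdot)$ is nonincreasing gives $v(t,x)\cdot x\le0$, so the flow is everywhere inward (and in particular $\mathrm{supp}\,u(t,\cdot)$ does not grow).

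The functional I would track is
\[
J(t)=\int_{\bR^d}\frac{u_0(0)-u(t,x)}{|x|^{d+\alpha}}\,dx\;=\;c_{d,\alpha}^{-1}\,\Lambda^\alpha u(t,0).
\]
For a smooth, radial, compactly supported profile attaining its maximum at the origin, the integrand is $O(|x|^{2-d-\alpha})$ near $0$ and equals $M\,|x|^{-d-\alpha}$ outside the support, so $J(t)$ is finite and strictly positive \emph{exactly} in the range $\alpha\in(0,2)$ — this is where the hypothesis on $\alpha$ enters. Differentiating, using \eqref{GQS}, and passing to radial variables,
\[
J'(t)=\int_{\bR^d}\frac{v\cdot\nabla u}{|x|^{d+\alpha}}\,dx=\omega_{d-1}\int_0^\infty\big(-b(t,r)\big)\big(-U_r(t,r)\big)\,r^{-\alpha-1}\,dr\;\ge\;0,
\]
both factors being nonnegative by radial monotonicity and the inward property. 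To close this into a Riccati inequality I would prove the quantitative refinement $-b(t,r)\ge c_1\,r\,J(t)$ for $r$ in the (bounded) support of $u(t,\cdot)$ — again from the kernel representation of $v$, now combined with $u\le M$ — and use the elementary identity $\int_0^\infty(-U_r)\,r^{-\alpha}\,dr=\alpha\int_0^\infty(M-U)\,r^{-\alpha-1}\,dr=\alpha J(t)/\omega_{d-1}$, which yields
\[
J'(t)\;\ge\;c_0\,J(t)^2,\qquad c_0=c_0(d,\alpha)>0 .
\]
Hence $J(t)\ge J(0)/(1-c_0J(0)t)$ and $J$ becomes infinite by some finite time $T\le(c_0J(0))^{-1}$.

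It remains to turn the blowup of $J$ into blowup of $\nabla u$. Since $J(t)$ stays finite as long as $u(t,\cdot)$ is smooth, smooth existence breaks down at some $T^*\le T$, and the Riccati lower bound then gives $\int_0^{T^*}J(s)\,ds=\infty$. Applying Gronwall to $\tfrac{d}{dt}|X(t,a)|=b(t,|X(t,a)|)$ with the inward bound $-b(t,r)\ge c_1rJ(t)$, a fixed small $a\ne0$ produces $|X(t,a)|\le|a|\exp\!\big(-c_1\!\int_0^tJ\big)\to0$ as $t\to T^*$; as $u(t,X(t,a))=u_0(a)<M=u(t,0)$, this forces
\[
\|\nabla u(t)\|_{L^\infty}\;\ge\;\frac{M-u_0(a)}{|X(t,a)|}\;\longrightarrow\;\infty\qquad(t\to T^*),
\]
which is the asserted finite-time gradient blowup. (For $\alpha<1$ one can instead quote the standard continuation criterion directly, since there $\|\nabla v\|_{L^\infty}\lesssim M+\|\nabla u\|_{L^\infty}$.)

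The step I expect to be the genuine obstacle is the quantitative inward bound $-b(t,r)\gtrsim r\,J(t)$ on the support of $u(t,\cdot)$, which is what powers the Riccati inequality. The bare sign $-b\ge0$ is easy; what is delicate is its $J$-weighted form with a constant \emph{uniform in $t$ and in $r$ across the (evolving) support}. This has to be extracted from the positivity of the kernel of $v$, the conserved sup-norm $M$, and the preserved radial monotonicity — essentially adapting the one-dimensional symmetrization of C\'ordoba--C\'ordoba--Fontelos to arbitrary dimension and general $\alpha\in(0,2)$, with the profile-independence of the constant the main thing to be careful about.
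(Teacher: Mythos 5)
Your argument hinges on the pointwise inward bound $-b(t,r)\ge c_1\,r\,J(t)$ for all $r$ in the support, and you rightly flag it as the unproven step; the problem is that, as a statement about general nonnegative radially decreasing profiles, it is false, so it cannot be "extracted from the positivity of the kernel" as you hope. The functional $J(t)=\int(u(t,0)-u(t,y))|y|^{-d-\alpha}\,\md y$ is dominated by the finest scale at which $u$ deviates from its maximum: for a spike of height $M$ at scale $\epsilon$ one has $J\gtrsim M\epsilon^{-\alpha}$. By contrast, $-b(t,r)$ at a radius $r$ of order one is controlled by $\|u\|_{L^\infty}$ and $\|u\|_{L^1}$ and stays bounded as $\epsilon\to 0$ (for $\alpha<1$ this is immediate from $|v|\le C\int u(y)|x-y|^{1-d-\alpha}\,\md y$). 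So for a profile consisting of a thin spike at the origin superposed on a broad bump at scale one — which nothing in the evolution rules out — the bound fails precisely at radii where $-\partial_r u\neq 0$, and neither the Riccati inequality $J'\ge c_0J^2$ nor the Gronwall estimate on characteristics follows. What is true is only the \emph{integrated} positivity of the bilinear term, which is exactly Theorem \ref{thmbd} of the paper: $\int_0^\infty (\cT f)(r)f'(r)r^{-1-\delta}\,\md r\ge C\int_0^\infty(f(r)-f(0))^2r^{-1-\alpha-\delta}\,\md r$. That inequality does not localize to a pointwise statement, and its proof (Mellin transform plus the nonnegativity of all Taylor coefficients of the kernel $g$, Lemma \ref{lem2.2}) is the real mathematical content of the blowup result; your proposal essentially relocates the entire difficulty into a lemma that is false in the pointwise form you need. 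Note also that your weight corresponds to the endpoint $\delta=\alpha$, which is excluded in Theorem \ref{thmbd} (one needs $\delta<\alpha$ for convergence of the Mellin integral of $g$ and $\delta+\alpha<2$ for the boundary terms, and $\delta<1$ to convert blowup of the functional into gradient blowup).

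Two further issues. You assume throughout that $u_0$ is radially nonincreasing and that this is propagated — it enters your symmetrization giving $v\cdot x\le 0$ everywhere and your inward bound — but the theorem hypothesizes only that $u_0$ is nonnegative, radial, compactly supported, and attains its maximum at the origin; monotonicity is not assumed, and the paper's proof never uses it (nonnegativity alone suffices to show the support does not expand, which is all that is needed). Finally, the deduction that $\int_0^{T^*}J\,\md s=\infty$ is invalid if the smooth solution breaks down strictly before the Riccati blowup time. The paper sidesteps both points by taking $\delta\in(0,\min\{\alpha,2-\alpha\})\subset(0,1)$, so that $I(t)=\int_0^L(u(t,0)-u(t,r))r^{-1-\delta}\,\md r\le \frac{L^{1-\delta}}{1-\delta}\|\partial_r u(t,\cdot)\|_{L^\infty}$; the Riccati inequality for $I$ (from Theorem \ref{thmbd} plus H\"older on $(0,L)$, which again requires $\delta<\alpha$) then forces gradient blowup directly, with no characteristics argument.
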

In Theorem \ref{thm1}, the conditions that the data is compactly supported and attains its supremum at the origin are just for simplicity of the presentation. In general, the solution still blows up in finite time if the initial data is sufficiently concentrated near the origin. See \eqref{eq8.55} in Remark \ref{rem1.4}.

The proof of Theorem \ref{thm1} is in the spirit of the work \cite{CCF05}, where the authors established a weighted inequalities for the Hilbert transform in order to get a positive lower bound of the bilinear term. The following theorem is crucial for the proof of Theorem \ref{thm1} and might be of independent interest.
\begin{theorem}[A positive lower bound]
                        \label{thmbd}
Let $\alpha\in (0,2)$, $\delta\in (-\alpha,\alpha)$ be constants satisfying $\delta+\alpha<2$, and $f$ be a bounded even function on $\bR$ with bounded first derivatives.
Let
$$
(\cT f)(r)=\int_0^\infty f'(\rho)g(r/\rho)\rho^{1-\alpha}\,\md \rho,
$$
where $g=g_d$ is defined in \eqref{eq16.38} and \eqref{eq4.34} below.
Then there exists a constant $C_{d,\alpha,\delta}>0$ such that
\begin{equation*}
\int_0^\infty \frac {(\cT f)(r)f'(r) } {r^{1+\delta}} \,\md r \ge
C_{d,\alpha,\delta} \int_0^\infty \frac {\big(f(r)-f(0)\big)^2} {r^{1+\alpha+\delta}}  \,\md r.
\end{equation*}
\end{theorem}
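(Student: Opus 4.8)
The plan is to diagonalize both sides of the inequality by passing to multiplicative (Mellin) variables. Put $c:=(\alpha+\delta)/2$, so that the assumptions $\delta>-\alpha$ and $\delta+\alpha<2$ amount to $0<c<1$, and $\beta:=1-\tfrac{\alpha}{2}+\tfrac{\delta}{2}$, so that $\delta<\alpha$ amounts to $\beta<1$. After the substitutions $r=e^x$, $\rho=e^y$, introduce the functions
$$u(x):=\big(f(e^x)-f(0)\big)e^{-cx},\qquad w(x):=f'(e^x)\,e^{(1-c)x}.$$
A direct computation gives $w=u'+cu$, shows that the right-hand side of the asserted inequality equals $\|u\|_{L^2(\bR)}^2$, and turns the left-hand side into the translation-invariant quadratic form
$$\int_{\bR}\!\int_{\bR} w(x)\,G(x-y)\,w(y)\,\md x\,\md y,\qquad G(t):=e^{-\beta t}g(e^t).$$
Since $g(0)=\int_{S^{d-1}}\omega_1\,\md\omega=0$, one has $g(s)=O(s)$ as $s\to 0^+$, and together with $\beta<1$ and $\alpha<2$ this makes $G\in L^1(\bR)$ (its only, integrable, singularity sits at $t=0$). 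Assuming, as we may, that the right-hand side is finite, so that $u,w\in L^2(\bR)$, Plancherel's theorem together with $\widehat w(\xi)=(c+i\xi)\widehat u(\xi)$ reduces the claim to the pointwise symbol inequality
$$(c^2+\xi^2)\,\re\widehat G(\xi)\ \ge\ C_{d,\alpha,\delta}\qquad\text{for all }\xi\in\bR,$$
the symmetrized kernel of the form having Fourier symbol $\re\widehat G(\xi)$. In fact the best constant equals $\inf_{\xi\in\bR}(c^2+\xi^2)\re\widehat G(\xi)$, which degenerates to $0$ precisely as $c\to0^+$, i.e. as $\delta\to-\alpha$ — consistent with the strictness of $\delta>-\alpha$.

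The next step is to evaluate $\widehat G(\xi)=\int_0^\infty g(\rho)\,\rho^{-i\xi-\beta-1}\,\md\rho$, the Mellin transform of $g=g_d$ along the line $\re=-\beta$. Using that $g_d(s)$ is, up to a constant, the average over $S^{d-1}$ of $\omega_1\,(1-2s\omega_1+s^2)^{-(d-2+\alpha)/2}$ (this is \eqref{eq16.38} and \eqref{eq4.34}), interchanging the integrations — after a harmless regularization when $\alpha\ge1$ — and invoking the classical evaluation of $\int_0^\infty x^{z-1}(1-2x\cos\theta+x^2)^{-\nu}\,\md x$ as a beta factor times a hypergeometric function of an elementary argument (which for $\nu=1$ is simply $\pi\sin((1-z)\theta)/(\sin(\pi z)\,\sin\theta)$), one reaches a reasonably explicit formula for $\widehat G(\xi)$ — essentially a product of Gamma functions in $z=-\beta-i\xi$ times an elementary even factor. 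In particular $\re\widehat G$ is an even, real-analytic function of $\xi$.

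It then remains to prove two properties of this symbol: (a) $\re\widehat G(\xi)>0$ for every $\xi\in\bR$, and (b) $(c^2+\xi^2)\re\widehat G(\xi)$ grows (like $|\xi|^{\alpha}$) as $|\xi|\to\infty$. For (b) I would localize $\widehat G$ near $\rho=1$, where $g_d$ is not smooth: from $1-2s\omega_1+s^2\approx(s-1)^2+\theta^2$ near $(s,\theta)=(1,0)$ ($\theta$ the polar angle on $S^{d-1}$) and the $\omega\to e_1$ portion of the sphere average, one gets $g_d(s)\sim c_*\,|s-1|^{1-\alpha}$ with $c_*>0$ (a logarithm when $\alpha=1$), hence $G(t)\sim c_*|t|^{1-\alpha}$ near $t=0$; transforming, $\re\widehat G(\xi)\sim c'_{d,\alpha}\,|\xi|^{\alpha-2}$ with $c'_{d,\alpha}>0$, so that $(c^2+\xi^2)\re\widehat G(\xi)$ indeed tends to $+\infty$. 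Together with (a) and the continuity of $\re\widehat G$, this gives the uniform positive lower bound and proves the theorem. The main obstacle is (a), together with the positivity of the leading coefficient $c'_{d,\alpha}$: establishing $\re\widehat G>0$ throughout $\bR$ requires a careful analysis of the explicit Gamma-function expression, exploiting the full set of constraints $\alpha\in(0,2)$, $\delta\in(-\alpha,\alpha)$, $\delta+\alpha<2$. This is the exact counterpart, in the radial multi-dimensional situation, of the weighted Hilbert-transform estimate of C\'{o}rdoba--C\'{o}rdoba--Fontelos that underlies \cite{CCF05}.
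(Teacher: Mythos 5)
Your reduction is sound and is in fact the same one the paper uses: the substitution $r=e^{x}$ turns the weighted pairing into a convolution form, Plancherel (equivalently, the Parseval identity for the Mellin transform) diagonalizes it, the relation $\widehat w(\xi)=(c+i\xi)\widehat u(\xi)$ with $c=(\alpha+\delta)/2$ produces exactly the factor $\big((\alpha+\delta)^2/4+\xi^2\big)$ of the paper's $H(\lambda)$, and the whole theorem collapses to the pointwise positivity of $(c^2+\xi^2)\,\re\widehat G(\xi)$, which is the paper's Proposition \ref{prop2.1}. The problem is that you stop there: you yourself flag the positivity of $\re\widehat G$ as ``the main obstacle,'' and the route you sketch for it --- evaluating the Mellin transform of $g_d$ through the classical formula for $\int_0^\infty x^{z-1}(1-2x\cos\theta+x^2)^{-\nu}\,\md x$ and then analyzing the resulting Gamma/hypergeometric expression --- is not carried out, and it is precisely the approach that is known to be delicate. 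For $d\ge 2$ and general $\alpha$ one has $\nu=(d-2+\alpha)/2\ne 1$, so the elementary $\pi\sin((1-z)\theta)/(\sin(\pi z)\sin\theta)$ formula does not apply; one is left with a $\theta$-average, against the sign-changing weight $\cos\theta\sin^{d-2}\theta$, of hypergeometric expressions whose real parts on a vertical line are not individually positive. This is essentially why the convexity/Polya-type argument of Li--Rodrigo breaks down for $d>2$ or small $\alpha$, as the paper points out. So the heart of the theorem is missing from your proposal.

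The paper closes this gap by a quite different and more elementary device: it proves (Lemma \ref{lem2.2}, via the recurrence \eqref{eq8.57}) that \emph{every} Taylor coefficient $a_{2n+1}$ of $g$ at the origin is strictly positive when $\alpha>0$, and then uses the inversion symmetry $g(1/r)=r^{d-2+\alpha}g(r)$ to fold the Mellin integral onto $(0,1)$, where termwise integration yields
$$
\re H_1(\lambda)=\sum_{n\ge0}a_{2n+1}\left(\frac{2n+\frac\alpha2-\frac\delta2}{\lambda^2+(2n+\frac\alpha2-\frac\delta2)^2}+\frac{d+2n+\frac\alpha2+\frac\delta2}{\lambda^2+(d+2n+\frac\alpha2+\frac\delta2)^2}\right),
$$
a manifestly positive series; a monotonicity observation in $\lambda$ then gives the \emph{uniform} lower bound $\re H(\lambda)\ge \frac{(\alpha+\delta)^2}{4}\sum_n a_{2n+1}/(d+2n+\frac\alpha2+\frac\delta2)$ in one stroke, so neither your step (a) nor the large-$|\xi|$ asymptotics (b) needs to be handled separately. (Your step (b) would anyway be needed in your scheme, since positivity of $\re\widehat G$ alone does not prevent $(c^2+\xi^2)\re\widehat G(\xi)$ from having infimum $0$ at infinity; your sketch of the $|\xi|^{\alpha-2}$ asymptotics is plausible but the positivity of the leading constant $c'_{d,\alpha}$ is again unproved.) A smaller technical point: from ``$f$ bounded with bounded first derivatives'' one does not get $w\in L^2(\bR)$ at $x\to+\infty$ (the weight $e^{(1-c)x}$ grows), so the Plancherel step needs either an additional decay hypothesis or the truncation/limiting argument implicit in the application to compactly supported data.
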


Such inequality was known mostly in the case when $\alpha=1$.
It was first established in \cite{CCF05} when $d=1$ by a delicate argument involving contour integrals; see also \cite{CCF06} for some generalizations to non-even functions. In the multi-dimensional case, a similar inequality was systematically studied in \cite{BC07} for a class of possibly non-radial functions $f$ using spherical harmonic expansions. Independently, the case when $d=2$ and $\delta=0$ was treated in \cite{dongli2}.

The case when $\alpha\neq 1$ was first considered in \cite{LR09}, where
for $\alpha\in [1/2,1)$ and $d=2$, such inequality was obtained by using Polya's theorem on the cosine transform of convex functions. However, as pointed out in \cite{LR09} this argument no longer works when $d>2$ or $\alpha>0$ is small enough, for example $1/5$.

In terms of the ranges of $\alpha$ and $d$, Theorem \ref{thmbd} is more general than these in the literature. The proof of it is fairly transparent, and the key lemma is Lemma \ref{lem2.2}, which is a bit surprising. It shows that all the coefficients of Taylor's expansion of $g$ at $0$ are nonnegative whenever $\alpha\ge 0$.

Regarding the global well-posedness in the case $\alpha=0$, we assume that the initial data $u_0$ is sufficiently smooth and decays at the infinity.

\begin{theorem}[Global well-posedness when $\alpha=0$]
                            \label{thm2}
Let $\alpha=0$ and $d\ge 2$. Suppose that $u_0\in H^s$ for some $s>d/2+1$. When $d=2$, we also assume, for the local well posedness, that $u_0\in L_p$ for some $p\in (1,2)$.  Then there exists a unique smooth global solution to \eqref{GQS}. 
\end{theorem}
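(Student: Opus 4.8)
\emph{Overall strategy.} The plan is to combine a standard local existence theorem with a continuation criterion and a global-in-time a priori bound; the argument runs parallel to the classical proof of global well-posedness for the two-dimensional Euler equations. When $\alpha=0$ we have $v=\Lambda^{-2}\nabla u=\nabla(-\Delta)^{-1}u$, so $v$ is one derivative smoother than $u$ (just as velocity is to vorticity), each entry of the matrix $\nabla v$ is a Calder\'on--Zygmund operator applied to $u$, and, crucially,
\[
\Div v=\Delta(-\Delta)^{-1}u=-u,
\]
which is bounded. First I would prove local existence and uniqueness of a smooth solution on a maximal interval $[0,T^*)$ by mollification and energy estimates, with the continuation criterion that if $T^*<\infty$ then $\limsup_{t\to T^*}\|u(t)\|_{H^s}=\infty$. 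In dimension $d=2$ the hypothesis $u_0\in L_p$ with $p\in(1,2)$ enters exactly here, to place $v$ in a Lebesgue space: by the Hardy--Littlewood--Sobolev inequality $\|v\|_{L_q}\le C\|u\|_{L_p}$ with $1/q=1/p-1/d$, and the endpoint $q=\infty$ that would occur for $p=d=2$ is excluded by taking $p<2$; for $d\ge3$ this is automatic with $q=2d/(d-2)$. It then suffices to bound $\|u(t)\|_{H^s}$ on bounded time intervals.

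\emph{Basic a priori bounds.} Since $u$ solves a transport equation along the field $v$, which is globally Lipschitz on $[0,T^*)$ because $\nabla v\in H^{s-1}\hookrightarrow L^\infty$, the maximum principle gives $\|u(t)\|_{L^\infty}=\|u_0\|_{L^\infty}$ for all $t$. Testing the equation with $|u|^{p-2}u$ (or with $u$) and integrating by parts, using $\Div v=-u$, yields $\frac{d}{dt}\|u\|_{L_p}^p=-\int u\,|u|^p\,\md x\le\|u_0\|_{L^\infty}\|u\|_{L_p}^p$, so $\|u(t)\|_{L_p}$ and $\|u(t)\|_{L^2}$, and hence $\|\nabla v(t)\|_{L_p}$ and $\|\nabla v(t)\|_{L^2}$, grow at most exponentially. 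Differentiating the equation, $w:=\nabla u$ satisfies $w_t+v\cdot\nabla w=Bw$ with $\|B(t)\|_{L^\infty}=\|\nabla v(t)\|_{L^\infty}$, so integrating along the flow of $v$,
\[
\|\nabla u(t)\|_{L^\infty}\le\|\nabla u_0\|_{L^\infty}\,\exp\!\Big(\int_0^t\|\nabla v(\tau)\|_{L^\infty}\,\md\tau\Big).
\]

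\emph{Closing the argument.} The heart of the matter is a logarithmic bound on $\|\nabla v\|_{L^\infty}$. Since a Calder\'on--Zygmund operator does not map $L^\infty$ into $L^\infty$, one decomposes $u$ into low and high frequencies and obtains an inequality of Brezis--Wainger/Kozono--Taniuchi type,
\[
\|\nabla v\|_{L^\infty}\le C\|u\|_{L^\infty}\big(1+\log(e+\|u\|_{\cC^\gamma})\big)+C\|u\|_{L^2}
\]
for any fixed $\gamma\in(0,1)$ (with $\|u\|_{L^2}$ replaced by $\|u\|_{L_p}$ when $d=2$), the constant depending only on $d,\gamma$. Combining this with the interpolation $\|u\|_{\cC^\gamma}\le C\|u\|_{L^\infty}^{1-\gamma}\|\nabla u\|_{L^\infty}^{\gamma}$, the previous display, and the already established bounds on $\|u\|_{L^\infty}$ and $\|u\|_{L^2}$, one finds that $L(t):=\int_0^t\|\nabla v(\tau)\|_{L^\infty}\,\md\tau$ obeys a differential inequality $L'(t)\le C(t)\big(1+L(t)\big)$ with $C(t)$ locally bounded; Gronwall's inequality then makes $L$, and hence $\|\nabla u(t)\|_{L^\infty}$ and $\|\nabla v(t)\|_{L^\infty}$, finite on every bounded interval. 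Finally, applying $\Lambda^s$ to the equation, pairing with $\Lambda^s u$, using $\int v\cdot\nabla\Lambda^s u\,\Lambda^s u\,\md x=\tfrac12\int u\,|\Lambda^s u|^2\,\md x$ (again by $\Div v=-u$) together with the commutator estimate $\|[\Lambda^s,v\cdot\nabla]u\|_{L^2}\le C\|\nabla v\|_{L^\infty}\|u\|_{\dot H^s}+C\|v\|_{\dot H^s}\|\nabla u\|_{L^\infty}$ and $\|v\|_{\dot H^s}\le C\|u\|_{H^s}$, one gets
\[
\frac{d}{dt}\|u(t)\|_{H^s}\le C\big(1+\|\nabla v(t)\|_{L^\infty}+\|\nabla u(t)\|_{L^\infty}\big)\,\|u(t)\|_{H^s},
\]
so $\|u(t)\|_{H^s}$ remains finite for all $t$, contradicting $T^*<\infty$. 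Uniqueness follows from a standard $L^2$ estimate for the difference of two solutions.

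\emph{Main obstacle.} The delicate step is the logarithmic estimate for $\nabla v$: one must carry out the Littlewood--Paley splitting so that the low-frequency part is controlled by $\|u\|_{L^2}$ (or $\|u\|_{L_p}$ in two dimensions) and the high-frequency part by a H\"older norm, and — decisively, to avoid circularity — the constant must depend only on quantities already known a priori to be finite ($\|u_0\|_{L^\infty}$, the exponentially bounded $\|u\|_{L^2}$, and $t$), not on $\|u\|_{H^s}$. A secondary technical point, responsible for the extra hypothesis in two dimensions, is that $v=\nabla(-\Delta)^{-1}u$ need not lie in $L^2$ near the zero frequency, so one works with $v\in L_q\cap\dot H^{s+1}$ rather than with $v\in H^{s+1}$.
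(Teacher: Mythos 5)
Your proposal is correct, and it rests on the same two pillars as the paper's proof: the conservation of $\|u\|_{L_\infty}$ along the flow (equivalently, the fact that $\Div v=-u$ is bounded) and a logarithmic bound for the Calder\'on--Zygmund operator $\nabla v=R\otimes Ru$. The execution differs in a meaningful way, however. You run the classical Beale--Kato--Majda/2D-Euler scheme in two tiers: first you close a differential inequality for $L(t)=\int_0^t\|\nabla v\|_{L_\infty}\,\md\tau$ using the $\cC^\gamma$-based logarithmic inequality together with the interpolation $\|u\|_{\cC^\gamma}\le C\|u\|_{L_\infty}^{1-\gamma}\|\nabla u\|_{L_\infty}^{\gamma}$ and the transport bound on $\|\nabla u\|_{L_\infty}$, and only then do you propagate the $H^s$ norm, where your commutator estimate produces the extra term $\|v\|_{\dot H^s}\|\nabla u\|_{L_\infty}$ that is harmless because $\|\nabla u\|_{L_\infty}$ has already been tamed. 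The paper instead closes the estimate in one pass at the $H^s$ level: by distributing derivatives with the Gagliardo--Nirenberg inequality and the identity $\nabla u=\Delta v$ (valid since $\alpha=0$), it arranges the energy estimate so that \emph{only} $\|\nabla v\|_{L_\infty}=\|R\otimes Ru\|_{L_\infty}$ appears as a prefactor, i.e.\ $\frac{\md}{\md t}\|u\|_{H^s}\le N\|R\otimes Ru\|_{L_\infty}\|u\|_{H^s}$, and then applies the $H^s$-based logarithmic inequality $\|R\otimes Ru\|_{L_\infty}\le N\|u_0\|_{L_\infty}\log(e+\|u\|_{H^s})$ followed by two applications of Gronwall, yielding an explicit double-exponential bound. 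The paper's route is shorter and gives the quantitative growth rate directly; your route is more modular (it isolates a BKM-type continuation criterion in terms of $\int\|\nabla v\|_{L_\infty}\,\md\tau$ and an independent bound on $\|\nabla u\|_{L_\infty}$) at the cost of an extra interpolation step. Your treatment of the $d=2$ hypothesis $u_0\in L_p$, $p\in(1,2)$ --- needed only to control the low-frequency part of $v$ in the local theory --- matches the paper's stated purpose for that assumption.
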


\begin{remark}
                            \label{rem1.4}
We note that our results can be extended to equations with fractional dissipations:
\begin{equation}
                            \label{eq3.56}
u_t+v\cdot \nabla u+\Lambda^\beta u=0.
\end{equation}
By using Theorem \ref{thmbd} and following the proof in \cite{LR09}, one can show the following finite-time blowup results for \eqref{eq3.56}. Under the assumptions of Theorem \ref{thm1}, let $\beta\in (0,\alpha/2)$ and $\delta\in (0,\alpha-2\beta)$. There exists a constant $N$ depending only on $d$, $\alpha$, $\beta$, and $\delta$, such that if
\begin{equation}
                        \label{eq8.55}
\int_0^\infty \frac {u_0(0)-u_0(r)} {r^{1+\delta}}\,\md r\ge N(1+\|u_0\|_{L_\infty}),
\end{equation}
then the solution to \eqref{eq3.56} with initial data $u_0=u_0(|x|)$ blows up in finite time. On the other hand, when $\alpha\in (0,2)$ and $\beta\in [\alpha,2]$, by modifying the argument of non-local moduli of continuity firstly appeared in \cite{KNV}, one can show that the solution is global in time. See, for instance, \cite{dongli14}.
We leave the details to the interested reader.
\end{remark}

The remaining part of the paper is organized as follows. In the next section, we rewrite the equation in polar coordinates. The proof of Theorem \ref{thmbd} is given in Section \ref{sec3} after we prove a lower bound of a certain function appeared in the Mellin transform of the nonlinear term. Finally, we complete the proofs of Theorems \ref{thm1} and \ref{thm2} in Section \ref{sec4}.

\section{Equation in polar coordinates}

It is clear that evolving from radially symmetric initial data, the solution of \eqref{GQS} is radial in its life span.
In this section, we derive the equation in polar coordinates. By the integral representation of the Riesz potential, the velocity is given by
$$
v(t,x)=\text{P.V.}\int_{\bR^d}\nabla u(t,y)G_{d,\alpha}(x-y)\,\md y
$$
for any $\alpha\in (2-d,2)$, where the kernel
$$
G_{d,\alpha}(x)=C_{d,\alpha}|x|^{-(d-2+\alpha)}
$$
and
$C_{d,\alpha}$
is a positive constant depending only on $d$ and $\alpha$.
When $d=1$,
$$
G_{d,\alpha}(x)=\left\{\begin{aligned}&-C_{d,\alpha}|x|^{1-\alpha}\quad
&\text{for}\,\,\alpha\in (0,1)\\
&-C_{d,\alpha}\log |x|\quad
&\text{for}\,\,\alpha= 1
\end{aligned}\right..
$$

Let $S_1$ be the unit sphere in $\bR^{d}$ and $\sigma_{d}$ be its surface area.
We recall that
$$
\sigma_d=\frac {2 \pi^{d/2}}{\Gamma(d/2)},
$$
where $\Gamma$ is the Gamma function.
For any radially symmetric solution $u(t,x)=u(t,|x|)$, by using polar coordinates the velocity $v$ can be expressed as
$$
v(t,x)=v(t,|x|)x/|x|
$$
and for $r\ge 0$,
\begin{equation}
                                            \label{eq16.39}
v(t,r)=\int_0^\infty \partial_\rho u(t,\rho)g(r/\rho)\rho^{1-\alpha}\,\md \rho=:\cT u(t,r),                             \end{equation}
where modulo a positive constant factor depending only on $d$ and $\alpha$, the function $g:[0,\infty)\to \bR$ is  defined by
\begin{align}
                            \label{eq16.38}
g(r)=g_d(r)&=\int_{S_1}\frac {y_1} {|r e_1-y|^{d-2+\alpha}}\,\md \sigma(y)\nonumber\\
&=\sigma_{d-1}\int_{0}^\pi \frac {\cos\theta\sin^{d-2}\theta} {(r^2+1-2r\cos\theta)^{(d-2+\alpha)/2}}\,\md \theta
\end{align}
when $d\ge 2$, and
\begin{equation}
                                \label{eq4.34}
g(r)=\left\{\begin{aligned}&|1+r|^{1-\alpha}-|1-r|^{1-\alpha}\quad
&\text{for}\,\,\alpha\in (0,1)\\
&\log |1+r|-\log |1-r|\quad
&\text{for}\,\,\alpha= 1\\
&|1-r|^{1-\alpha}-|1+r|^{1-\alpha}\quad
&\text{for}\,\,\alpha\in (1,2)
\end{aligned}\right.
\end{equation}
when $d=1$.
Under the radial symmetry assumption, Equation \eqref{GQS} is then reduced to
\begin{equation}
                                    \label{GQS2}
\partial_t u(t,r)+v(t,r)\partial_r u(t,r)=0
\end{equation}
in polar coordinates.

\section{Proof of Theorem \ref{thmbd}}
                        \label{sec3}

In this section, we give the proof of Theorem \ref{thmbd}. For $\lambda\in \bR$, define
\begin{equation}
                            \label{eq12.38}
H_1(\lambda):=\int_0^\infty r^{i\lambda-2+\frac \alpha 2-\frac \delta 2}g(r)\,\md r
\end{equation}
and
\begin{equation}
                        \label{eq7.53}
H(\lambda)=
\left(\big(\alpha+\delta\big)^2/4+\lambda^2\right) H_1(\lambda).
\end{equation}
The motivation for introducing these two functions will be clear from the proof of Theorem \ref{thm1} below.
\begin{prop}
                \label{prop2.1}
Let $\alpha\in (0,2)$, $\delta\in (-\alpha,\alpha)$, and $g$ and $H$ be the functions defined in \eqref{eq16.38} (or \eqref{eq4.34} when $d=1$) and \eqref{eq7.53}, respectively. Then for any $\lambda\in \bR$, we have
\begin{equation*}
\re H(\lambda)\ge C_{d,\alpha,\delta}>0.
\end{equation*}
\end{prop}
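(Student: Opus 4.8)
The plan is to read off $\re H(\lambda)$ from the Taylor coefficients of $g$ at the origin, after using the natural scaling symmetry of $g$ to fold its defining Mellin integral onto $(0,1)$. Two facts about $g$ are needed. Writing $|r^{-1}e_1-y|=r^{-1}|re_1-y|$ for $|y|=1$ in \eqref{eq16.38} (and reading it off \eqref{eq4.34} when $d=1$) gives the Kelvin-type identity $g(1/r)=r^{d-2+\alpha}g(r)$ for $r>0$; together with $g(0)=0$ this shows $g(r)=\sum_{k\ge1}a_kr^k$ on $[0,1)$ and $g(r)=O(r^{-(d-1+\alpha)})$ as $r\to\infty$. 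The second, crucial, fact is that all $a_k\ge0$ — this is Lemma \ref{lem2.2} — and moreover $a_1=g'(0)>0$: for $d\ge2$ a one-line differentiation of \eqref{eq16.38} gives $a_1=\sigma_{d-1}(d-2+\alpha)\int_0^\pi\cos^2\theta\sin^{d-2}\theta\,\md\theta>0$, while for $d=1$ one has $a_1=2|1-\alpha|>0$ from \eqref{eq4.34}.

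Set $s=i\lambda-1+\alpha/2-\delta/2$, so $H_1(\lambda)=\int_0^\infty r^{s-1}g(r)\,\md r$; the hypotheses $\delta<\alpha$ (convergence at $0$), $\alpha<2$ (local integrability of $g$ near $r=1$, where $g$ grows no faster than $(1-r)^{1-\alpha}$ or a logarithm), and the decay $g(r)=O(r^{-(d-1+\alpha)})$ at infinity make this integral absolutely convergent. Splitting at $r=1$ and applying $r\mapsto1/r$ together with the scaling identity to the tail gives $H_1(\lambda)=\int_0^1 g(r)\bigl(r^{s-1}+r^{d-3+\alpha-s}\bigr)\,\md r$. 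Inserting the Taylor series and integrating term by term — justified by dominated convergence, since $0\le\sum_{k\le K}a_kr^k\le g(r)$ and $g(r)r^{\re s-1},\,g(r)r^{d-3+\alpha-\re s}\in L^1(0,1)$ — yields, with $A_k:=k-1+(\alpha-\delta)/2$ and $B_k:=k-1+d+(\alpha+\delta)/2$ (both positive for $k\ge1$, and $B_k-A_k=d+\delta$ independently of $k$),
\begin{equation*}
H_1(\lambda)=\sum_{k\ge1}a_k\left(\frac{1}{A_k+i\lambda}+\frac{1}{B_k-i\lambda}\right),
\end{equation*}
the series converging absolutely because $\sum_k a_k/A_k=\int_0^1 g(r)r^{\re s-1}\,\md r<\infty$.

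Since $(\alpha+\delta)^2/4+\lambda^2$ is a positive real number, $\re H(\lambda)=\bigl((\alpha+\delta)^2/4+\lambda^2\bigr)\re H_1(\lambda)$, and for each $k$,
\begin{equation*}
\re\left(\frac{1}{A_k+i\lambda}+\frac{1}{B_k-i\lambda}\right)=(A_k+B_k)\,\frac{A_kB_k+\lambda^2}{(A_kB_k+\lambda^2)^2+\lambda^2(d+\delta)^2}>0
\end{equation*}
because $A_k,B_k>0$. As $a_k\ge0$, every summand of $\re H(\lambda)$ is nonnegative, so retaining only $k=1$ (where $a_1>0$, $A_1+B_1=d+\alpha$, $A_1B_1=\tfrac{\alpha-\delta}{2}\bigl(d+\tfrac{\alpha+\delta}{2}\bigr)>0$) gives
\begin{equation*}
\re H(\lambda)\ge a_1(d+\alpha)\,\frac{\bigl((\alpha+\delta)^2/4+\lambda^2\bigr)\bigl(A_1B_1+\lambda^2\bigr)}{(A_1B_1+\lambda^2)^2+\lambda^2(d+\delta)^2}=:a_1(d+\alpha)\,\Psi(\lambda).
\end{equation*}
The function $\Psi$ is continuous and strictly positive on $[0,\infty)$ — here $\delta>-\alpha$ is used, so that $(\alpha+\delta)^2/4>0$ and hence $\Psi(0)>0$ — and $\Psi(\lambda)\to1$ as $\lambda\to\infty$, numerator and denominator each being $\sim\lambda^4$; hence $\inf_{\lambda\in\bR}\Psi(\lambda)>0$ and $\re H(\lambda)\ge C_{d,\alpha,\delta}>0$.

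The genuine content lies in the nonnegativity $a_k\ge0$: without it the identity for $H_1$ is useless, since $\re H_1(\lambda)$ by itself decays like $\lambda^{-2}$ and multiplying by $(\alpha+\delta)^2/4+\lambda^2$ only helps if the series is sign-definite. I would establish it (this being Lemma \ref{lem2.2}) by expanding $(1-2r\cos\theta+r^2)^{-(d-2+\alpha)/2}=\sum_k C_k^{(d-2+\alpha)/2}(\cos\theta)\,r^k$ in Gegenbauer polynomials and integrating against $\cos\theta\sin^{d-2}\theta$, which expresses $a_k$, up to a positive factor, as a connection coefficient of $C_k^{(d-2+\alpha)/2}$ in the basis $\{C_j^{(d-2)/2}\}$; such coefficients are nonnegative precisely because $(d-2+\alpha)/2\ge(d-2)/2$, i.e.\ $\alpha\ge0$. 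The remaining point — that the sum–integral interchange survives the case $\alpha\in[1,2)$, where $g$ blows up at $r=1$ and $\sum_k a_k=+\infty$ — is harmless, since the partial sums stay trapped between $0$ and $g$.
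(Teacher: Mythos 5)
Your proof is correct and follows essentially the same route as the paper: fold the Mellin integral of $g$ onto $(0,1)$ via the scaling identity $g(1/r)=r^{d-2+\alpha}g(r)$, expand using the nonnegativity of the Taylor coefficients from Lemma \ref{lem2.2}, and observe that each resulting term contributes a positive real part after multiplication by $(\alpha+\delta)^2/4+\lambda^2$; the paper extracts the uniform lower bound by keeping the whole series and using monotonicity in $\lambda$ to reduce to $\lambda=0$, whereas you keep only the $k=1$ term and use $\Psi(\lambda)\to 1$, an equally valid cosmetic variant. (One trivial slip: for $d=1$, $\alpha=1$ your formula $a_1=2|1-\alpha|$ gives $0$, but from \eqref{eq4.34} one has $a_1=2$ there, so the needed positivity of $a_1$ still holds.)
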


In the proposition above, we require $\delta<\alpha$ so that the integral in \eqref{eq12.38} is convergent. The condition that $\delta>-\alpha$ is only to guarantee that the factor
$$
\big(\alpha+\delta\big)^2/4+\lambda^2
$$
is strictly positive for any $\lambda\in \bR$, and can certainly be relaxed.

To prove the proposition, we begin with analyzing some properties of $g$. 
Recall that when $d\ge 2$,
\begin{align}
                            \label{eq16.38b}
g(r)=\sigma_{d-1}\int_{0}^\pi \frac {\cos\theta\sin^{d-2}\theta} {(r^2+1-2r\cos\theta)^{(d-2+\alpha)/2}}\,\md \theta
\end{align}
for $r\in [0,\infty)$, and when $d=1$, $g$ is defined in \eqref{eq4.34}. It is smooth in $[0,1)\cup (1,\infty)$ with a possible singularity at $1$. The singularity is of order $|r-1|^{1-\alpha}$ when $\alpha>1$ and of order $\log |r-1|$ when $\alpha=1$. We can extend $g$ to $(-\infty,\infty)$ by using \eqref{eq16.38b} (or \eqref{eq4.34} when $d=1$). It is easily seen that, as a function in $(-1,1)$,  $g$ is odd, real analytic, and its Taylor's expansion at $0$ has the form:
\begin{equation}
                    \label{eq9.52}
g(r)=\sum_{n=0}^\infty a_{2n+1}r^{2n+1}.
\end{equation}
The following lemma is a bit surprising, which reads that all the coefficients $a_{2n+1}$ above are nonnegative whenever $\alpha\ge 0$.
\begin{lemma}
                            \label{lem2.2}
Let $g:(-\infty,\infty)\to \bR$ be the function defined in \eqref{eq16.38b}  (or \eqref{eq4.34} when $d=1$). Then as a function in $(-1,1)$, the following hold. When $\alpha=0$, we have $a_1>0$ and $a_{2n+1}=0$ for $n\ge 1$. When $\alpha>0$, we have $a_{2n+1}>0$ for any $n\ge 0$.
\end{lemma}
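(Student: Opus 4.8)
The statement to prove is Lemma \ref{lem2.2}: the Taylor coefficients $a_{2n+1}$ of $g$ at $0$ are all nonnegative when $\alpha\ge 0$ (and strictly positive when $\alpha>0$, except for the degenerate case $\alpha=0$ where only $a_1$ survives). The natural strategy is to produce an explicit formula for $a_{2n+1}$ and read off the sign. I would treat the case $d\ge 2$ and $d=1$ separately, since $g$ has a different closed form in each; the $d=1$ case from \eqref{eq4.34} is elementary and can be dispatched first.

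First I would handle $d=1$. For $\alpha\in(0,1)$ one has $g(r)=(1+r)^{1-\alpha}-(1-r)^{1-\alpha}$ on $(-1,1)$, so expanding each binomial series and subtracting gives $a_{2n+1}=2\binom{1-\alpha}{2n+1}$. The point is that $\binom{1-\alpha}{2n+1}=\frac{(1-\alpha)(-\alpha)(-1-\alpha)\cdots(1-\alpha-(2n+1)+1)}{(2n+1)!}$; the numerator is a product of $2n+1$ factors, of which the first is $1-\alpha>0$ and the remaining $2n$ factors are $-\alpha-j$ for $j=0,\dots,2n-1$, all negative — an even number of negatives — so the whole product is positive. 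The cases $\alpha=1$ (logarithm, $a_{2n+1}=2/(2n+1)>0$) and $\alpha\in(1,2)$ ($g$ picks up an overall sign flip but also $1-\alpha<0$, so the sign works out the same way) are analogous. This already exhibits the mechanism: one negative-ish factor from $1-\alpha$ or a sign flip, cancelled against an even count of negative factors.

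For $d\ge 2$ I would start from \eqref{eq16.38b} and expand the kernel $(r^2+1-2r\cos\theta)^{-(d-2+\alpha)/2}=\big((1-re^{i\theta})(1-re^{-i\theta})\big)^{-(d-2+\alpha)/2}$ in powers of $r$ using the binomial/Gegenbauer generating function: with $\nu=(d-2+\alpha)/2$, one has $(1-2r\cos\theta+r^2)^{-\nu}=\sum_{m\ge0}C_m^{\nu}(\cos\theta)\,r^m$, where $C_m^\nu$ are Gegenbauer (ultraspherical) polynomials. Then
\[
g(r)=\sigma_{d-1}\sum_{m\ge0} r^m \int_0^\pi \cos\theta\,\sin^{d-2}\theta\,C_m^\nu(\cos\theta)\,\md\theta,
\]
so $a_m$ is a constant times $\int_{-1}^1 t\,(1-t^2)^{(d-3)/2}C_m^\nu(t)\,\md t$. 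By parity this vanishes for even $m$, consistent with \eqref{eq9.52}. The remaining task is to show the integral for odd $m=2n+1$ is positive when $\alpha>0$ (i.e. $\nu>(d-2)/2$, equivalently $\nu>\nu_0$ where $\nu_0=(d-2)/2$ is the "weight parameter" making $(1-t^2)^{(d-3)/2}$ the Gegenbauer weight of index $\nu_0$). Here the weight $(1-t^2)^{(d-3)/2}=(1-t^2)^{\nu_0-1/2}$ is exactly the orthogonality weight for $C_k^{\nu_0}$, and $t = $ const $\cdot C_1^{\nu_0}(t)$, so the integral is (up to a positive constant) the $C_1^{\nu_0}$-Fourier coefficient of $C_{2n+1}^\nu$. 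The cleanest route is the connection formula expressing $C_{2n+1}^\nu$ in the basis $\{C_k^{\nu_0}\}$: it is classical (Gegenbauer's linearization/connection coefficients) that
\[
C_m^{\nu}(t)=\sum_{k} c_{m,k}\, C_k^{\nu_0}(t),\qquad c_{m,k}\ge 0\ \text{when}\ \nu\ge\nu_0,
\]
with strict positivity of $c_{2n+1,1}$ when $\nu>\nu_0$; projecting onto $C_1^{\nu_0}$ then gives $a_{2n+1}\ge 0$, with the claimed strict inequality. When $\alpha=0$, $\nu=\nu_0$ and $C_m^\nu=C_m^{\nu_0}$ is itself a basis element, orthogonal to $C_1^{\nu_0}$ unless $m=1$, giving $a_1>0$ and $a_{2n+1}=0$ for $n\ge1$.

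**Main obstacle.** The crux is the nonnegativity of the Gegenbauer connection coefficients $c_{m,k}$ for $\nu\ge\nu_0$. I expect the author either to cite this (it follows from the classical formula of Gegenbauer, see e.g. Askey's work on positivity of connection coefficients, where $c_{m,k}$ is given by an explicit ratio of Gamma functions times a ${}_3F_2$ that reduces to a product of Pochhammer symbols of definite sign), or to reprove the one coefficient needed ($k=1$) directly by the same kind of factor-counting used in the $d=1$ case: write $a_{2n+1}$ as a ratio of Gamma functions / Pochhammer products in $\alpha$ and $d$, and check that shifting $\nu_0\mapsto\nu_0+\alpha/2$ multiplies in exactly an even number of sign-changing factors. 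Either way, the analytic content is an explicit evaluation of $\int_{-1}^1 t(1-t^2)^{(d-3)/2}C_{2n+1}^\nu(t)\,\md t$ as a manifestly-signed product; the combinatorial bookkeeping of signs of the Pochhammer factors in $-\alpha$ is where care is required, and is the only place the hypothesis $\alpha\ge 0$ enters.
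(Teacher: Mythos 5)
Your proof is correct, and for $d\ge2$ it takes a genuinely different route from the paper's. The paper avoids Gegenbauer polynomials entirely: it integrates by parts in $\theta$ to write $g(r)=\sigma_{d-1}r\tfrac{d-2+\alpha}{d-1}f(r,d,\alpha)$ with $f(r,d,\alpha)=\int_0^\pi\sin^d\theta\,A^{-d-\alpha}(r,\theta)\,\md\theta$; for $\alpha=0$ it observes that $|x-re_1|^{-d}$ is harmonic on the unit ball of $\bR^{d+2}$, so by the mean value property $f(\cdot,d,0)$ is constant and $g$ is exactly linear; for $\alpha>0$ it derives the recurrence $\partial_r^2f(r,d,\alpha)=(d+\alpha)\bigl[(d+\alpha+1)f(r,d,\alpha+2)-(d+\alpha+2)f(r,d+2,\alpha+2)\bigr]$ and evaluates at $r=0$ by induction, arriving at the closed product formula \eqref{eq9.30}, which is positive factor by factor. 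Your generating-function argument lands on exactly the same Pochhammer product: with $\nu=(d-2+\alpha)/2$ and $\nu_0=(d-2)/2$, the coefficient of $C_1^{\nu_0}$ in the connection expansion of $C_{2n+1}^{\nu}$ is $\tfrac{(\alpha/2)_n(\nu)_{n+1}}{n!\,(\nu_0+1)_{n+1}}\cdot\tfrac{1+\nu_0}{\nu_0}$, which after clearing powers of $2$ reproduces \eqref{eq9.30} up to the positive normalization $\int_{-1}^1 t\,(1-t^2)^{\nu_0-1/2}C_1^{\nu_0}(t)\,\md t$. What your route buys is a conceptual explanation of why $\alpha=0$ is critical (at $\nu=\nu_0$ orthogonality annihilates every coefficient except $a_1$); what the paper's buys is self-containedness, needing nothing beyond the Beta function, whereas you must import the classical positivity of Gegenbauer connection coefficients (Gegenbauer's formula, as in Askey's work) --- a legitimate, citable input that you correctly identify as the crux.

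Two details to nail down if you write this out. First, at $d=2$ the index $\nu_0=0$ is degenerate: the weight is the Chebyshev weight and you must use the limit normalization $\lim_{\mu\to0}C_k^{\mu}/\mu\propto T_k$ for the basis and the connection formula; the projection onto $T_1$ then goes through for $\alpha>0$ since $\nu=\alpha/2>0$. (In that same limit $g\equiv0$ when $d=2$, $\alpha=0$, consistent with the paper's own formula $g(r)=r\tfrac{d-2}{d}\sigma_d$; so the claim $a_1>0$ in the $\alpha=0$ case is really a statement for $d\ne2$.) Second, the quoted connection coefficient is positive precisely because $(\nu-\nu_0)_n=(\alpha/2)_n>0$, which is exactly where the hypothesis $\alpha>0$ enters --- matching your own diagnosis --- and your $d=1$ sign-counting via binomial coefficients is correct as stated, including the compensating overall sign flip in \eqref{eq4.34} when $\alpha\in(1,2)$.
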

\begin{proof}
When $d=1$, it suffices for us to apply the Taylor expansion to \eqref{eq4.34}. In the sequel, we assume $d\ge 2$. We shall calculate the coefficients $a_{2n+1}$ explicitly. Denote
$$
A(r,\theta)=(r^2+1-2r\cos\theta)^{1/2}.
$$
Integrating by parts, we can rewrite $g$ into a more convenient form:
\begin{equation}
                        \label{eq11.07}
g(r)=\sigma_{d-1}\int_{0}^\pi \frac {(\sin^{d-1}\theta)'} {(d-1)A^{d-2+\alpha}(r,\theta)}\,\md \theta
:=\sigma_{d-1}r\frac {d-2+\alpha} {d-1}f(r,d,\alpha),
\end{equation}
where
$$
f(r,d,\alpha)=\int_{0}^\pi \frac {\sin^{d}\theta} {A^{d+\alpha}(r,\theta)}\,\md \theta.
$$

{\em Case 1: $\alpha=0$.} For fixed $r>1$, $|x-re_1|^{-d}$ is a harmonic function on the unit ball in $\bR^{d+2}$. Consequently, by the mean value theorem, we get for $r>1$
$$
f(r,d,0)=\int_{0}^\pi \frac {\sin^{d}\theta} {A^{d}(r,\theta)}\,\md \theta=\frac {\sigma_{d+2}} {\sigma_{d+1}}r^{-d}.
$$
For any $r\in (0,1)$, since $A(1/r,\theta)=A(r,\theta)/r$, it follows that
$$
f(r,d,0)=r^df(1/r,d,0)=\frac {\sigma_{d+2}} {\sigma_{d+1}}.
$$
When $r=0$, we also have
$$
f(0,d,\alpha)=\int_{0}^\pi {\sin^{d}\theta} \,\md \theta=\frac {\sigma_{d+2}} {\sigma_{d+1}}.
$$
Thus in this case, we obtain
$$
g(r)=\sigma_{d-1}r\frac {d-2}{d-1} \frac {\sigma_{d+2}} {\sigma_{d+1}}
=r(d-2)\frac {\pi^{d/2}}{\Gamma\big((d+2)/2\big)}=r\frac {d-2} d\sigma_d.
$$
for any $r\in (-1,1)$.

{\em Case 2: $\alpha>0$.} It is obvious that $f(r,d,\alpha)$ is infinitely differentiable (and analytic) in $r\in (-1,1)$. We compute
$$
\partial_r f(r,d,\alpha)=-\int_{0}^\pi (d+\alpha)\frac {\sin^{d}\theta(r-\cos\theta)} {A^{d+\alpha+2}(r,\theta)}\,\md \theta,
$$
and
\begin{align*}
&\partial_r^2 f(r,d,\alpha)=-\int_{0}^\pi (d+\alpha)\frac {\sin^{d}\theta} {A^{d+\alpha+2}(r,\theta)}\,\md \theta\\
&\quad+\int_{0}^\pi (d+\alpha)(d+\alpha+2)\frac {\sin^{d}\theta(r-\cos\theta)^2} {A^{d+\alpha+4}(r,\theta)}\,\md \theta.
\end{align*}
Since
$$
(r-\cos\theta)^2=A^2(r,\theta)-\sin^2\theta,
$$
we get
\begin{align}
&\partial_r^2 f(r,d,\alpha)\nonumber\\
&=\int_{0}^\pi (d+\alpha)(d+\alpha+1)\frac {\sin^{d}\theta} {A^{d+\alpha+2}(r,\theta)}\,\md \theta\nonumber\\
&\quad-\int_{0}^\pi (d+\alpha)(d+\alpha+2)\frac {\sin^{d+2}\theta} {A^{d+\alpha+4}(r,\theta)}\,\md \theta\nonumber\\
                            \label{eq8.57}
&=(d+\alpha)\big[(d+\alpha+1) f(r,d,\alpha+2)-(d+\alpha+2)f(r,d+2,\alpha+2)\big].
\end{align}
Note that
$$
f(0,d,\alpha)=\frac {\sigma_{d+2}} {\sigma_{d+1}}=\frac{\pi^{1/2}\Gamma\big(\frac{d+1} 2\big)}{\Gamma\big(\frac{d+2} 2\big)}=B\left(\frac 1 2,\frac{d+1} 2\right),
$$
where $B(\cdot,\cdot)$ is the Beta function.
By induction, from the recurrence relation \eqref{eq8.57} it is easy to check that
\begin{align}
                                    \label{eq9.04}
\partial^{2n}_r f(0,d,\alpha)=&B\left(\frac 1 2,\frac{d+1} 2\right)(2n-1)!!\,\alpha(\alpha+2)\cdots(\alpha+2n-2)\nonumber\\
&\quad \cdot \frac {(d+\alpha)(d+\alpha+2)\cdots(d+\alpha+2n-2)}
{(d+2)(d+4)\cdots(d+2n)}
\end{align}
for any integer $n\ge 0$.
Therefore, using \eqref{eq11.07} and \eqref{eq9.04},
\begin{align}
                                    \label{eq9.30}
a_{2n+1}&=\frac{\sigma_{d-1}}{(2n)!}\, \frac{d-2+\alpha} {d-1}\,\partial^{2n}_r f(0,d,\alpha)\nonumber\\
&=\sigma_d \frac{\alpha(\alpha+2)\cdots(\alpha+2n-2)}{(2n)!!}\nonumber\\
&\quad \cdot \frac {(d+\alpha-2)(d+\alpha)(d+\alpha+2)\cdots(d+\alpha+2n-2)}
{d(d+2)(d+4)\cdots(d+2n)}
\end{align}
for any integer $n\ge 0$. It is worth noting that from the expression \eqref{eq9.30} (and \eqref{eq4.34} in 1D case), one can easily see that asymptotically
\begin{equation}
                                    \label{eq10.23}
a_{2n+1}\sim n^{\alpha-2}\quad \text{as}\quad n\to \infty.
\end{equation}
Thus \eqref{eq9.52} is convergent for any $r\in (-1,1)$. The lemma is proved.
\end{proof}

We are ready to give the proof of Proposition \ref{prop2.1}.

\begin{proof}[Proof of Proposition \ref{prop2.1}]
We first note that
$$
g(1/r)=r^{d-2+\alpha}g(r).
$$
By a change of variables, we get
\begin{align*}
H_1(\lambda)&=\left(\int_0^1+\int_1^\infty\right) r^{i\lambda-2+\frac \alpha 2-\frac \delta 2}g(r)\,\md r\\
&=\int_0^1r^{i\lambda-2+\frac \alpha 2-\frac \delta 2}g(r)\,\md r+\int_0^1
r^{-i\lambda-\frac \alpha 2+\frac \delta 2}g(1/r)\,\md r\\
&=\int_0^1\big(r^{i\lambda-2+\frac \alpha 2-\frac \delta 2}+
r^{-i\lambda+d-2+\frac \alpha 2+\frac \delta 2}\big)g(r)\,\md r.
\end{align*}
It then follows from Lemma \ref{lem2.2} that
\begin{align*}
H_1(\lambda)
&=\int_0^1\big(r^{i\lambda-2+\frac \alpha 2-\frac \delta 2}+
r^{-i\lambda+d-2+\frac \alpha 2+\frac \delta 2}\big)\sum_{n=0}^\infty a_{2n+1}r^{2n+1}\,\md r\\
&=\sum_{n=0}^\infty a_{2n+1} \int_0^1\big(r^{i\lambda-2+\frac \alpha 2-\frac \delta 2}+
r^{-i\lambda+d-2+\frac \alpha 2+\frac \delta 2}\big) r^{2n+1}\,\md r\\
&=\sum_{n=0}^\infty a_{2n+1} \left(\frac 1 {i\lambda+2n+\frac \alpha 2-\frac \delta 2}+\frac 1 {-i\lambda+d+2n+\frac \alpha 2+\frac \delta 2}\right),
\end{align*}
where $a_{2n+1}>0$ for any $n\ge 0$.
Here we used the condition $\delta<\alpha$ so that the integrals are convergent and the condition $\alpha<2$ so that by \eqref{eq10.23} the summation is absolutely convergent, which justifies the second equality by using the Fubini theorem. Therefore, by \eqref{eq7.53},
\begin{align}
&\re H(\lambda)\nonumber\\
&=\left(\big(\alpha+\delta\big)^2/4+\lambda^2\right)\re H_1(\lambda)\nonumber\\
&=\left(\big(\alpha+\delta\big)^2/4+\lambda^2\right)\nonumber\\
&\quad\cdot\sum_{n=0}^\infty a_{2n+1} \left(\frac {2n+\frac \alpha 2-\frac \delta 2} {\lambda^2+(2n+\frac \alpha 2-\frac \delta 2)^2}+\frac {d+2n+\frac \alpha 2+\frac \delta 2} {\lambda^2+(d+2n+\frac \alpha 2+\frac \delta 2)^2}\right).       \label{eq10.43}
\end{align}
Since for each $n$,
$$
\frac {\big(\alpha+\delta\big)^2/4+\lambda^2} {\lambda^2+(d+2n+\frac \alpha 2+\frac \delta 2)^2}
$$
is increasing with respect to $\lambda\in [0,\infty)$ and $\delta>-\alpha$, we see that
$$
\re H(\lambda)\ge \frac {\big(\alpha+\delta\big)^2} 4 \sum_{n=0}^\infty \frac {a_{2n+1}} {d+2n+\frac \alpha 2+\frac \delta 2}>0.
$$
The proposition is proved.
\end{proof}
\begin{remark}
By using \eqref{eq10.43} and \eqref{eq10.23}, it is easily seen that asymptotically
$$
\re H(\lambda)\sim |\lambda|^\alpha\quad \text{as}\quad |\lambda|\to \infty.
$$
However, we will not use this fact in the proof below.
\end{remark}

Now we are in the position to prove Theorem \ref{thmbd}.

\begin{proof}[Proof of Theorem \ref{thmbd}]
Denote
\begin{align*}
I:&=\int_0^\infty \frac {(\cT f)(r)f'(r) } {r^{1+\delta}} \,\md r\\
&=\int_0^\infty \frac {(\cT f)(r)} {r^{1-\frac \alpha 2+\frac \delta 2}}\cdot
\frac{f'(r) } {r^{\frac \alpha 2 +\frac \delta 2-1}} \,\frac{\md r} r.
\end{align*}
By the Parseval identity for the Mellin transform, we have
\begin{equation}
                                \label{eq1.54}
I=\frac 1 {2\pi}\int_{-\infty}^\infty A(\lambda)\overline{B(\lambda)}\,\md \lambda,
\end{equation}
where
\begin{align*}
A(\lambda)=\int_0^\infty r^{i\lambda-\frac \alpha 2-\frac \delta 2}f'(r)\,\md r,\quad
B(\lambda)=\int_0^\infty r^{i\lambda-2+\frac \alpha 2-\frac \delta 2}\cT f(r)\,\md r.
\end{align*}
Using \eqref{eq16.39}, the Fubini theorem, and a change of variables, we have
\begin{align*}
B(\lambda)&=\int_0^\infty \int_0^\infty r^{i\lambda-2+\frac \alpha 2-\frac \delta 2}f'(\rho)g(r/\rho)\rho^{1-\alpha}\,\md \rho\,\md r\\
&=\int_0^\infty \rho^{1-\alpha}f'(\rho)\left( \int_0^\infty r^{i\lambda-2+\frac \alpha 2-\frac \delta 2}g(r/\rho)\,\md r\right)\,\md \rho\\
&=H_1(\lambda)\int_0^\infty \rho^{i\lambda-\frac \alpha 2-\frac \delta 2} f'(\rho)\,\md \rho,
\end{align*}
where $H_1(\lambda)$ is defined in \eqref{eq12.38}.
We then integrate by parts to get
\begin{align*}
B(\lambda)&=(\frac \alpha 2+\frac \delta 2-i\lambda)H_1(\lambda)\int_0^\infty r^{i\lambda-\frac \alpha 2-\frac \delta 2-1}\big(f(r)-f(0)\big)\,\md r,\\
A(\lambda)&=(\frac \alpha 2+\frac \delta 2-i\lambda)\int_0^\infty r^{i\lambda-\frac \alpha 2-\frac \delta 2-1}\big(f(r)-f(0)\big)\,\md r.
\end{align*}
Here we used the fact that $\delta+\alpha<2$ so that the boundary terms at $0$ vanish.
These together with \eqref{eq1.54} yield
\begin{align*}
I&=\frac 1 {2\pi} \int_{-\infty}^\infty \left(\frac {(\alpha+\delta)^2} 4+\lambda^2\right)\overline{H_1(\lambda)}
\left|\int_0^\infty r^{i\lambda-\frac \alpha 2-\frac \delta 2-1}\big(f(r)-f(0)\big)\,\md r\right|^2\,\md \lambda\\
&=\frac 1 {2\pi} \int_{-\infty}^\infty \overline{H(\lambda)}
\left|\int_0^\infty r^{i\lambda-\frac \alpha 2-\frac \delta 2-1}\big(f(r)-f(0)\big)\,\md r\right|^2\,\md \lambda.
\end{align*}
Since $I$ is a real valued function, we have
$$
I=\frac 1 {2\pi} \int_{-\infty}^\infty \re \big(H(\lambda)\big)
\left|\int_0^\infty r^{i\lambda-\frac \alpha 2-\frac \delta 2-1}\big(f(r)-f(0)\big)\,\md r\right|^2\,\md \lambda.
$$
By Proposition \ref{prop2.1},
$$
\re \big(H(\lambda)\big)\ge C_{d,\alpha,\delta}>0
$$
for any $\lambda\in \bR$.
We then obtain by applying the Parseval identity again that
\begin{align*}
I&\ge \frac {C_{d,\alpha,\delta}} {2\pi} \int_{-\infty}^\infty
\left|\int_0^\infty r^{i\lambda-\frac \alpha 2-\frac \delta 2-1}\big(f(r)-f(0)\big)\,\md r\right|^2\,\md \lambda\\
&=C_{d,\alpha,\delta}\int_0^\infty \frac {\big(f(r)-f(0)\big)^2}{r^{1+\alpha+\delta}}\,\md r.
\end{align*}
The theorem is proved.
\end{proof}

\section{Proofs of Theorems \ref{thm1} and \ref{thm2}}
                                    \label{sec4}
In the last section, we complete the proofs of Theorems \ref{thm1} and \ref{thm2}.

\begin{proof}[Proof of Theorem \ref{thm1}]
For the case when $\alpha\in (0,1]$ (regular velocity), the local well-posedness of Equation \eqref{GQS} is quite standard. See, for instance, \cite{dongli14} and \cite{Ch14}. For $\alpha\in (1,2]$ (singular velocity), we refer the reader  to \cite{CCCFW} and \cite{Ch14}, where a novel commutator estimate was applied to non-local transport equations with the velocity given by convolution with odd kernels.

Next we show the finite-time blowup of the solution. We suppose that $u_0$ is supported in a ball $B_R(0)$ for some $R>0$. Since $u\ge 0$ and radial, it is not difficult to see from the representation formula of $v$ that the support of $u(t,\cdot)$ shrinks as $t$ increases\footnote{We note that this is true even in the non-radial case if the support is replaced by the convex hull of the support.}. Therefore, for any $t>0$ the support of $u(t,\cdot)$ lies inside $B_R(0)$. Moreover, since $v(t,0)=0$ for any $t\ge 0$ by the radial symmetry, $u(t,0)=u_0(0)$ for any $t\ge 0$. In the case when $\alpha=2$, the equation is reduced to the Burgers' equation
$$
u_t(t,r)+(\partial_r u(t,x))^2=0,
$$
for which the finite-time blowup is well known. For $\alpha\in (0,2)$, let $\delta\in (0,\min\{\alpha,2-\alpha\})$, which certainly implies that $\delta<1$.
We denote
$$
I(t):=\int_0^L \frac {u(t,0)-u(t,r)} {r^{1+\delta}} \,\md r.
$$
We multiply Equation \eqref{GQS2} by the weight $r^{-1-\delta}$ and integrate in $r\in (0,L)$.
By Theorem \ref{thmbd}, we have
\begin{align*}
\frac d {dt} I(t)&=\int_0^L \frac {-\partial_t u(t,r)} {r^{1+\delta}} \,\md r= \int_0^L \frac {\cT u(t,r) \partial_r u(t,r)} {r^{1+\delta}} \,\md r \\
& =\int_0^\infty \frac {\cT u(t,r) \partial_r u(t,r)} {r^{1+\delta}} \,\md r  \\
& \ge C_{d,\alpha,\delta} \int_0^\infty \frac {(u(t,0)-u(t,r))^2} {r^{1+\alpha+\delta}} \,\md r \\
& \ge C_{d,\alpha,\delta} \int_0^L \frac {(u(t,0)-u(t,r))^2} {r^{1+\alpha+\delta}} \,\md r \\
& \ge \tilde C_{d,\alpha,\delta} \left(\int_0^L \frac {u(t,0)-u(t,r)} {r^{1+\delta}}
\,\md r\right)^2\\
&=\tilde C_{d,\alpha,\delta} (I(t))^2,
\end{align*}
where in the last inequality we used the fact that $\delta<\alpha$ and H\"older's inequality. This implies that
$I(t)$ blows up in finite time. Now since
$$
I(t) \le \frac {L^{1-\delta}} {1-\delta} \|\partial_r u(t,\cdot) \|_{L^\infty},
$$
we conclude that $\| \partial_r u(t,\cdot) \|_{L^\infty}$ must blow up in finite time. The theorem is proved.
\end{proof}

\begin{proof}[Proof of Theorem \ref{thm2}]
Again for the proof of the local well-posedness, we refer the reader to, for instance, \cite{dongli14} and \cite{Ch14}. Note that the condition $u_0\in L_p$ for some $p\in (1,2)$ when $d=2$ is used to
 control the low frequency part in the contraction argument.

We now prove the global regularity. Multiplying both sides of \eqref{GQS} by $u|u|^{p-2}$ and integrating in $x$, it is easy to see that
\begin{align}
\frac {d} {dt} \| u(t,\cdot)\|_{L_p} \le \| \Div v(t,\cdot) \|_{L_\infty} \| u(t,\cdot) \|_{L_p}.
\label{Mar6_e13}
\end{align}
In the sequel, $N$ is a constant depending only on $d$ and $s$, which may vary from line to line. For $k=0,\ldots, s$, we differentiate \eqref{GQS} with respect to the $x$ variable $k$ times, multiply it by $\partial_x^k u$, and use integration by parts, Leibniz's rule, and H\"older's inequality to get
\begin{align}
&\frac 1 2\frac d {dt} \Bigl( \| \partial_x^k u(t,\cdot) \|_{L_2}^2 \Bigr)\notag\\
& = \int_{\mathbb R} \partial_x^k( v \cdot \nabla u ) \partial_x^k u \,\md x \notag \\
& \le N\| \Div v \|_{L_\infty} \| \partial_x^k u(t,\cdot) \|_{L_2}^2 \notag \\
& \quad +N \| \partial_x^k u(t,\cdot) \|_{L_2}\sum_{l=1}^k \|
\partial_x^{l}v(t,\cdot)
\|_{L_{\frac{2k}{l-1}}} \cdot \|
\partial_x^{k-l}\nabla u(t,\cdot)
\|_{L_{\frac{2k}{k-l+1}}}. \label{Ma6_e11}
\end{align}
By using the Gagliardo--Nirenberg inequality and the boundedness of Riesz operators in $L_p,p\in (1,\infty)$, we have for any $2\le l \le k$,
\begin{align*}
\| \partial_x^{l}v(t,\cdot) \|_{L_{\frac{2k}{l-1}}}
&\le N\| \partial_x v(t,\cdot) \|_{L_\infty}^{1-\frac{l-1}{k}}
\cdot \| \partial_x^{k+1} v(t,\cdot)  \|_{L_2}^{\frac{l-1}{k}} \\
&\le N\| \partial_x v(t,\cdot) \|_{L_\infty}^{1-\frac{l-1}{k}}
\cdot \| \partial_x^k u(t,\cdot)  \|_{L_2}^{\frac{l-1}{k}}
\end{align*}
and
\begin{align*}
\| \partial_x^{k-l} \nabla u(t,\cdot) \|_{L_{\frac{2k}{k-l+1}}}
&\le N \| \partial_x^{k-l+2}v(t,\cdot) \|_{L_{\frac{2k}{k-l+1}}}\\
&\le N\| \partial_x v(t,\cdot) \|_{L_\infty}^{1-\frac{k-l+1}{k}}
\cdot \| \partial_x^{k+1}  v (t,\cdot)\|_{L_2}^{\frac{k-l+1}{k}}\\
&\le N\| \partial_x v(t,\cdot) \|_{L_\infty}^{1-\frac{k-l+1}{k}}
\cdot \| \partial_x^{k}   u (t,\cdot)\|_{L_2}^{\frac{k-l+1}{k}}.
\end{align*}
Inserting the above estimates into \eqref{Ma6_e11},  we obtain
\begin{align*}
\frac{d}{dt} \Bigl( \| \partial_x^k u(t,\cdot) \|_{L_2}^2 \Bigr) & \le
N\| \partial_x v(t,\cdot) \|_{L_\infty} \cdot
\|\partial_x^k u(t,\cdot)\|_{L_2}^2.
\end{align*}
This together with \eqref{Mar6_e13} yields
\begin{align}
&\frac{d}{dt} \Bigl( \| u(t,\cdot) \|_{H^s}+\| u(t,\cdot) \|_{L_p} \Bigr)\nonumber\\
                            \label{eq11.02}
& \le
N\| R\otimes R u(t,\cdot) \|_{L_\infty} \cdot \Bigl(
 \| u(t,\cdot) \|_{H^s}+\| u(t,\cdot) \|_{L_p}\Bigr).
\end{align}
From the logarithmic type inequality
$$
\|R\otimes R u\|_{L_\infty}\leq N \|u\|_{L_\infty}\log(e+\|u\|_{H^s})=N \|u_0\|_{L_\infty}\log(e+\|u\|_{H^s})
$$
and \eqref{eq11.02}, we get the double exponential bound of the norms
$$
\| u(t,\cdot) \|_{H^s}+\| u(t,\cdot) \|_{L_p}\le \text{exp}(e^{Nt\|u_0\|_{L_\infty}})\big(\| u_0\|_{H^s}+\| u_0 \|_{L_p}\big)
$$
by using Gronwall's inequality twice.
\end{proof}

\begin{remark}
By a straightforward modification of the proof, we can extend Inequality \eqref{eq11.02} to the case when $\alpha\in [0,1]$:
\begin{align*}
&\frac{d}{dt} \Bigl( \| u(t,\cdot) \|_{H^s}+\| u(t,\cdot) \|_{L_p} \Bigr)\nonumber\\
& \le
N\| R\otimes R \Lambda^{\alpha} u(t,\cdot) \|_{L_\infty} \cdot \Bigl(
 \| u(t,\cdot) \|_{H^s}+\| u(t,\cdot) \|_{L_p}\Bigr).
\end{align*}
This together with the Sobolev imbedding theorem actually implies a regularity criterion which reads that $u$ is regular up to time $T$ if and only if
$$
\int_0^T \| R\otimes R \Lambda^{\alpha} u(t,\cdot) \|_{L_\infty}\,\md t <\infty,
$$
and in that case we have
\begin{equation}
                                    \label{eq9.30bb}
\| u(t,\cdot) \|_{H^s}+\| u(t,\cdot) \|_{L_p}\le e^{N\int_0^T \| R\otimes R \Lambda^{\alpha}u(t,\cdot) \|_{L_\infty}\,\md t }\big(\| u_0\|_{H^s}+\| u_0 \|_{L_p}\big)
\end{equation}
for any $t\in [0,T]$. This regularity criterion result was contained in Theorem 1.1 (ii) of \cite{Ch14} with a different proof. However, instead of the single exponential bound \eqref{eq9.30bb}, a double exponential bound was proved in \cite{Ch14}. The single exponential bound here enable us to obtain the global regularity in the case $\alpha=0$ as shown above.

It is also worth noting that in view of \eqref{eq9.30bb}, for $\alpha\in (0,1)$, the $C^\alpha$ norm of the solution in Theorem \ref{thm1} must blow up in finite time.
\end{remark}



\end{document}